\renewcommand{\a}{\alpha}
\newcommand{\chio}{\chi_{0*}}
\newcommand{\chis}{\chi_{s*}}
\newcommand{\chit}{\chi_{*t}}
\renewcommand{\d}{\delta}
\newcommand{\Fbm}{\reg{2}{m}}
\newcommand{\FbmE}{\Fbm E}
\newcommand{\FbmEhat}{\Fbmhat E}
\newcommand{\Fbmhat}{\reghat{2}{m}}
\newcommand{\Fm}{\reg{1}{m}}
\newcommand{\FmE}{\Fm E}
\newcommand{\FmEx}{\Fmx E}
\newcommand{\FmM}{\Fm M}
\newcommand{\Fmpi}{\Fm\pi}
\newcommand{\Fmrho}{\Fm\rho}
\newcommand{\Fmx}{\regx{1}{m}}
\newcommand{\g}{\gamma}
\newcommand{\gt}{\tilde{\g}}
\newcommand{\ghat}{\hat{\g}}
\newcommand{\GL}{\mathrm{GL}}
\newcommand{\id}{\mathrm{id}}
\newcommand{\jbar}{\bar{\jmath}}
\newcommand{\jbara}{\jbar^1}
\newcommand{\jbarataugt}{\jbara(\taumE\circ\gt)}
\newcommand{\Jbm}{J^2_m}
\newcommand{\JbmE}{\Jbm E}
\newcommand{\JbmEhat}{\Jbmhat E}
\newcommand{\Jbmhat}{\hat{J}^2_m}
\newcommand{\jbog}{j^2_0\g}
\newcommand{\jbophi}{j^2_0\phi}
\newcommand{\jbophihat}{j^2_0\phihat}
\newcommand{\jchio}{j^1_0\chi_{0*}}
\newcommand{\jchit}{j^1_0\chi_{*t}}
\newcommand{\jjchistart}{j^1_0(t\mapsto j^1_0\chit)}
\newcommand{\jjchit}{j^1_0(t\mapsto j^1_0\chi_t)}
\newcommand{\Jm}{J^1_m}
\newcommand{\JmE}{\Jm E}
\newcommand{\Jmpi}{\Jm\pi}
\newcommand{\joa}{j^1_0(1_{\R^m})}
\newcommand{\jofg}{j^1_0(f\circ\g)}
\newcommand{\jog}{j^1_0\g}
\newcommand{\jogab}{j^1_0(\g_1\cdot\g_2)}
\newcommand{\joghat}{j^1_0\ghat}
\newcommand{\jogphi}{j^1_0(\g\circ\phi)}
\newcommand{\jogt}{j^1_0\gt}
\newcommand{\jogtr}{j^1_0(\g\circ\tr_t)}
\newcommand{\joid}{j^1_0(\id_{\R^m})}
\newcommand{\jophi}{j^1_0\phi}
\newcommand{\jophihat}{j^1_0\phihat}
\newcommand{\jophiab}{j^1_0(\phi_1\circ\phi_2)}
\newcommand{\jopsix}{j^1_0(\psi\circ x^{-1})}
\newcommand{\jos}{j^1_0\sigma}
\newcommand{\jospinv}{j^1_0(\sigma\circ\phi^{-1})}
\newcommand{\Jpi}{J^1\pi}
\newcommand{\jppsi}{j^1_p\psi}
\newcommand{\Lbm}{L^2_m}
\newcommand{\Lbmhat}{\hat{L}^2_m}
\newcommand{\Lbmt}{\widetilde{L}^2_m}
\newcommand{\Lm}{L^1_m}
\renewcommand{\o}{\mathrm{o}}
\newcommand{\phihat}{\hat{\phi}}
\newcommand{\pimE}{\pi_{mE}}
\newcommand{\Pm}{P^{1,1}_m}
\newcommand{\R}{\mathbb{R}}
\newcommand{\reg}[2]{\raisebox{0ex}[1ex]{$\overset{\hspace{0.2em}\scriptscriptstyle\o}{%
  \raisebox{0ex}[1.3ex]{\phantom{T}}}\hspace{-0.65em}T^{#1}_{#2}$}}
\newcommand{\reghat}[2]{\raisebox{0ex}[2.5ex]{$\overset{\scriptscriptstyle\o}{%
  \raisebox{0ex}[1.8ex]{$\hat{\phantom{T}}$}}\hspace{-0.75em}T^{#1}_{#2}$}}
\newcommand{\regx}[2]{\overset{\hspace{0.2em}\scriptscriptstyle\o}{%
  \raisebox{0ex}[0.9ex]{$\scriptstyle\phantom{T}$}}\hspace{-0.52em}T^{#1}_{#2}}
\newcommand{\rhohat}{\hat{\rho}}  
\newcommand{\taumE}{\tau_{mE}}
\newcommand{\taumoE}{\raisebox{0ex}[1ex]{$\overset{\scriptscriptstyle\o}{%
  \raisebox{0ex}[0.7ex]{\phantom{$\tau$}}}\hspace{-0.6em}\tau_{mE}$}}
\newcommand{\taumTmE}{\tau_{m(\TmE)}}
\newcommand{\Tbm}{T^2_m}
\newcommand{\TbmE}{\Tbm E}
\newcommand{\TbmEhat}{\Tbmhat E}
\newcommand{\Tbmhat}{\hat{T}^2_m}
\newcommand{\Tm}{T^1_m}
\newcommand{\TmE}{\Tm E}
\newcommand{\Tmf}{\Tm f}
\newcommand{\TmM}{\Tm M}
\newcommand{\tr}{\mbox{\textsc{t}}}
\newcommand{\VmJmE}{V_m\JmE}
\newcommand{\VmJmEalt}{V_m^\wedge\JmE}
\newcommand{\VmJmEsym}{V_m^\vee\JmE}
\newcommand{\VmtaumE}{V_m\taumE}
\newcommand{\VmtaumEalt}{V_m^\wedge\taumE}
\newcommand{\VmtaumEsym}{V_m^\vee\taumE}
\newcommand{\VmtaumoE}{V_m\taumoE}
\newcommand{\VmtaumoEalt}{V_m^\wedge\taumoE}
\newcommand{\VmtaumoEsym}{V_m^\vee\taumoE}
\newcommand{\art}[6]{#1: #2 {\it #3\/} {\bf #4} (#5) #6}
\newcommand{\book}[4]{#1: {\it #2\/} (#3, #4)}
\newtheorem{thm}{Theorem}
\newtheorem{prop}[thm]{Proposition}
\newtheorem{lem}[thm]{Lemma}
\newtheorem{cor}[thm]{Corollary}
\title{Double structures and jets}
\author{D.J.~Saunders}
\date{}
\begin{document}

\maketitle

\begin{abstract}
We show how the double vector bundle structure of the manifold of double velocities, with its submanifolds of holonomic and semiholonomic double velocities, is mirrored by a structure of holonomic and semiholonomic subgroups in the principal prolongation of the first jet group. We use the actions of these groups to construct holonomic and semiholonomic submanifolds in the manifold of double contact elements, and show that these give rise to affine bundles where a semiholonomic element has well-defined holonomic and curvature components.\\[2ex]
\textbf{MSC:} 58A20, 53C05\\
\textbf{Keywords:} double structure, velocity, contact element, principal prolongation
\end{abstract}

\section{Introduction}

Since the time of Ehresmann it has been known that the \emph{velocities} on a manifold $E$, the equivalence classes of maps $\R^m\supset O\to E$ under the relation of having the same value and derivatives up to some order $r$ at zero, may be related to the \emph{contact elements}, the equivalence classes of $m$-dimensional submanifolds under the relation of having $r$-th order contact at a point; the relationship involves the action of jet groups. It has also been known that non-holonomic jets, where the action of taking a jet is repeated, give a generalisation which is important when considering questions of integrability, and that there is an intermediate notion of semiholonomic jets where the generalisation is concentrated on just the highest order derivatives, and which gives rise to a well-defined concept of curvature. These generalisations apply to both velocities and contact elements, and it is of interest to discover the relations between them. An article by~Kol\'{a}\v{r}~\cite{Kol11} (see also~\cite{KV10}) describes these relations, using the principal prolongations of jet groups~\cite{KMS93} to obtain nonholonomic contact elements from nonholonomic velocities.

At a meeting in Krak\'{o}w for the eightieth birthday of W.M.~Tulczyjew, the author gave a talk which also considered these relations in the special case of double $1$-velocities and double $1$-contact elements, and the present paper describes some elements of that talk. We show that the first principal prolongation of the first order jet group does indeed have a kind of `double structure' which is similar to that of the manifold of double $1$-velocities. Indeed, the group was described in the talk as the `double jet group', but Jean~Pradines has suggested to the author that this might result in confusion with Ehresmann's double groupoid, and so the terminology in this paper has been modified:\ we now call it the `principal jet group'. We show that this group has three distinguished subgroups, the first of which may be canonically identified with the second order (holonomic) jet group, the second of which has good claim to be called `semiholonomic', and the third of which may be called a `curvature' subgroup. Although the action of the whole group on the double velocity manifold is free only on a certain open submanifold, we show that the action of each of these subgroups is free on a larger submanifold which includes vertical velocities. In this way we are able to construct a split short exact sequence of vector bundles over the manifold of first order contact elements; we can therefore show that both the holonomic and the semiholonomic double contact elements form affine bundles, and that each semiholonomic element gives rise to a unique holonomic/curvature pair.

\section{Preliminaries}
In this section we give a rapid survey of basic results on velocities; the original ideas were described in the works of Ehresmann~\cite{Ehr83}, and~\cite{KMS93} is a useful reference. See also~\cite{Pra77} for double structures, and~\cite{Lib97} for semiholonomic jets.

Let $E$ be a manifold (finite-dimensional, smooth, Hausdorff, paracompact) with $\dim E = n$. If $m<n$ then the manifold of first order $m$-velocities in $E$ will be denoted by $\TmE$, and the open submanifold of regular velocities by $\FmE$. An element of $\TmE$ is therefore of the form $\jog$ where $\g:O\to E$ is a map from a connected open subset $O\subset \R^m$ with $0 \subset O$ into $E$; the velocity is regular when $\g$ is an immersion near zero. We shall let $\taumE:\TmE\to E$ be the projection, and $\taumoE:\FmE\to E$ its restriction to regular velocities. Any map $f:E_1\to E_2$ gives rise to a prolonged map $\Tmf:\TmE_1\to\TmE_2$ by $\Tmf(\jog)=\jofg$.

The jet group $\Lm$ has elements of the form $\jophi$ where $\phi:O\to\R^m$ satisfies $\phi(0)=0$ and is a diffeomorphism near zero; composition is $\jophi_1\circ\jophi_2=\jophiab$, and the group may be identified with $\GL(m,\R)$. This group has a right action on $\TmE$ given by $\jog\cdot\jophi = \jogphi$ which restricts to a free action on $\FmE$. The quotient by this free action is a Hausdorff manifold $\JmE$, the manifold of first order $m$-dimensional contact elements, and may be identified with the Grassmannian manifold of $m$-planes in $E$; we shall denote a typical element of $\JmE$ by $[\jog]$, and we shall let $\rho:\FmE\to\JmE$ be the projection and $\pimE:\JmE\to E$ the induced map.

If $E$ is fibred over some other manifold by $\pi:E\to M$ then $\Fmpi\subset\FmE$ will denote the subset of velocities where the composite $\pi\circ\g$ is an immersion near zero; $\Fmpi$ is an open submanifold of $\FmE$. The prolongation $\Tm\pi:\TmE\to\TmM$ restricts to $\Fmpi\to\FmM$, but need not restrict to $\FmE\to\FmM$.

The free action of $\Lm$ restricts to $\Fmpi$, and the quotient is an open submanifold $\Jmpi\subset\JmE$ which may be identified with the manifold $\Jpi$ of jets of local sections of $\pi$ by $\Jpi \to \Jmpi$, $\jppsi\mapsto[\jopsix]$ where $p\in M$ and where $x$ is a coordinate map on $M$ around $p$.

We may apply these constructions where we start, not with $E$ itself, but with its velocity manifold $\TmE$. The double velocity manifold $\Tm\TmE$ is the total space of a double vector bundle:\ that is, there are two commuting vector bundle structures $\taumTmE:\Tm\TmE\to\TmE$ and $\Tm\taumE:\Tm\TmE\to\TmE$. By considering maps $\chi:O\times O\to E$ we may express elements $\jogt$ of $\Tm\TmE$, where $\gt:O\to\TmE$, as double jets $\jjchit$ where $\chi_t(s)=\chi(s,t)$, and hence define an involution $e:\Tm\TmE\to\TmE$, the \emph{exchange map}, by sending $\chi(s,t)$ to $\chi(t,s)$. The involution links the two vector bundle structures on $\Tm\TmE$, because $\taumTmE\circ e = \Tm\taumE$.

The double vector bundle structure and the exchange map define distinguished submanifolds of $\Tm\TmE$. The \emph{holonomic submanifold} $\TbmE$ is the fixed point set of the exchange map, and the \emph{semiholonomic submanifold} $\TbmEhat$ is the subset where the two vector bundle projections $\taumTmE$ and $\Tm\taumE$ are equal; thus we see that \raisebox{0ex}[1ex]{$\TbmE\subset\TbmEhat$}.

These submanifolds may also be described by considering distinguished maps $O\to\TmE$. Given any map $\g:O\to E$, its prolongation is the map $\jbara\g:O\to\TmE$, $\jbara\g(t)=\jogtr$ where $\tr_t:\R^m\to\R^m$ is the translation $\tr_t(s)=s+t$. If $\gt:O\to\TmE$ is a prolongation then necessarily $\gt=\jbarataugt$; if this condition holds only at zero, so that $\gt(0)=\jbarataugt(0)$, then $\gt$ is called a \emph{semiprolongation}. A velocity $\jogt\in\Tm\TmE$ is semiholonomic when $\gt$ is a semiprolongation, and it is holonomic when some representative map $\ghat:O\to\TmE$, with $\joghat=\jogt$, is a prolongation (it need not be the case that $\gt$ itself is a prolongation). The holonomic submanifold may be identified with the set of second order $m$-velocities $\{\jbog\}$.

Both the holonomic and the semiholonomic submanifolds of $\Tm\TmE$ define affine sub-bundles of the vector bundle $\taumTmE:\Tm\TmE\to\TmE$. This may be seen by expressing the velocity bundle $\TmE$ as a Whitney sum $\bigoplus^m TE$ and hence as a tensor product $TE\otimes_E\R^{m*}$. The submanifold of vertical double velocities $\VmtaumE$ may be written as
\begin{align*}
\VmtaumE & \cong V\taumE\otimes_{\TmE}\R^{m*} \\
& \cong \taumE^*(\TmE)\otimes_{\TmE}\R^{m*} \\
& \textstyle \cong \taumE^*(TE)\otimes_{\TmE}\bigotimes^2\R^{m*} \, .
\end{align*}
This is the vector bundle over $\TmE$ on which $\TbmEhat$ is modelled; it may be decomposed into its symmmetric and skewsymmetric parts,
\[
\textstyle
\VmtaumEsym \cong \taumE^*(TE)\otimes_{\TmE} S^2\R^{m*} \, , \qquad
\VmtaumEalt \cong \taumE^*(TE)\otimes_{\TmE} \bigwedge^2\R^{m*} \, .
\]
The symmetric part is the vector bundle over $\TmE$ on which $\TbmE$ is modelled, and the decomposition gives rise to a decomposition of semiholonomic double velocities
\[
\TbmEhat = \TbmE \oplus_{\TmE} \VmtaumEalt
\]
into holonomic and `curvature' components.

The concept of regularity may be applied in several different ways to double velocities. One observation is that the the pair of maps $\Tm\TmE\to\TmE$ defining the double vector bundle structure restrict to a pair of maps $\Fm\taumoE\to\FmE$ rather than $\Fm\FmE\to\FmE$:\ that is, we need to consider regular velocities $\jogt$ satisfying the stronger condition that $\taumE\circ\g$ is an immersion. Also, we may define $\FbmE=\TbmE\cap\Tm\FmE$ and $\FbmEhat=\TbmEhat\cap\Tm\FmE$, and then
\[
\FbmE \subset \FbmEhat \subset \Fm\taumoE \, .
\]
If we restrict the vector bundles $\VmtaumE$, $\VmtaumEsym$ and $\VmtaumEalt$ to points of $\FmE$ then we still have the decompositions
\[
\VmtaumoE = \VmtaumoEsym \oplus_{\FmEx} \VmtaumoEalt
\]
and
\[
\FbmEhat = \FbmE \oplus_{\FmEx} \VmtaumoEalt \, .
\]

It is often convenient to use local coordinates for proofs. If $(U,u^a)$ is a chart on $E$ then coordinates on the preimage of $U$ in $\TmE$ are $(u^a, u^a_i)$ where $u^a_i(\jog) = D_i\g^a(0)$; similarly, coordinates on the preimage in $\Tm\TmE$ are $(u^a, u^a_i; u^a_{\cdot j}, u^a_{ij})$. The double vector bundle projections are given by
\begin{align*}
u^a\circ\taumTmE & = u^a \, , & u^a\circ\Tm\taumE & = u^a \, , \\
u^a_i\circ\taumTmE & = u^a_i \, , & u^a_i\circ\Tm\taumE & = u^a_{\cdot i}
\end{align*}
and the exchange map is given by
\begin{align*}
u^a\circ e & = u^a \, , & u^a_{\cdot j}\circ e & = u^a_j \\
u^a_i\circ e & = u^a_{\cdot i} \, , & u^a_{ij}\circ e & = u^a_{ji} \, .
\end{align*}
The semiholonomic submanifold is therefore defined by the constraint equations $u^a_i = u^a_{\cdot i}$, and the holonomic submanifold by these and the additional constraint equations $u^a_{ij} = u^a_{ji}$.

\section{The structure of the principal jet group}

The \emph{first order $m$-dimensional principal prolongation} of a Lie group $G$ is defined in~\cite{KMS93} as the semidirect product of $\Tm G$ and $\Lm$, where the velocity manifold $\Tm G$ is given the group operation $\jog_1 \cdot \jog_2 = \jogab$, with $\g_1\cdot\g_2$ being the pointwise product $(\g_1\cdot\g_2)(t) = \g_1(t)\g_2(t)$, and where the action of $\Lm$ on $\Tm G$ by automorphisms is the one described in the previous section. A similar definition is given for higher order principal prolongations. A recent paper by Kol\'{a}\v{r}~\cite{Kol11} has shown how to use these prolongations, in the case where $G$ is itself a jet group, to construct repeated contact elements from repeated velocities. We shall describe this in the $(1,1)$-order case.

Recall first that $\Lm$ acts on $\FmE$ to give $\JmE$ as a quotient, so that $\Tm\Lm$ acts on $\Tm\FmE$ to give $\Tm\JmE$ as a quotient. Then, separately, $\Lm$ acts on the submanifold $\Fm\JmE\subset\Tm\JmE$ to give $\Jm\JmE$; consequently elements of $\Fm\rho\subset\Tm\FmE$ project twice to elements of $\Jm\JmE$. We shall write a general element of $\Jm\JmE$ as $[[\jogt]]$.

Write $\Pm$ for the semidirect product $\Lm\rtimes\Tm\Lm$, with group operation
\[
(\jophi_1,\jos_1) \cdot (\jophi_2,\jos_2) = \bigl( \jophi_1\cdot\jophi_2, (\jos_1\cdot\jophi_2)\cdot\jos_2 \bigr) \, ;
\]
if $A^i_j$ denote the global $\GL(m,\R)$ coordinates on $\Lm$ and $(A^i_j, B^i_{jk})$ the corresponding global coordinates on $\Tm\Lm$ then the product is given by
\begin{align*}
A^i_j(\jophi_1\cdot\jophi_2) & = A^i_k(\jophi_1) A^k_j(\jophi_2) \, , \\
A^i_j((\jos_1\cdot\jophi_2)\cdot\jos_2) & = A^i_k(\jos_1) A^k_j(\jos_2) \, , \\
B^i_{jk}((\jos_1\cdot\jophi_2)\cdot\jos_2) & = B^i_{hl}(\jos_1) A^h_j(\jos_2) A^l_k(\jophi_2) + A^i_h(\jos_1) B^h_{jk}(\jos_2)
\end{align*}
\begin{prop}
For maps $\chi:O\times O\to\R^m$ define $\chis,\chit:O\to \R^m$ by
\[
\chis(t) = \chi(s,t) - \chi(s,0) \, , \qquad \chit(s) = \chi(s,t) - \chi(0,t) \, ;
\]
then each element of $\Pm$ may be written uniquely in the form
\[
(\jophi,\jos) = \bigl( \jchio \, , \jjchistart \bigr)
\]
where $\chis$, $\chit$ are both immersions near zero.
\end{prop}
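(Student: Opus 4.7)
The plan is to construct an explicit $\chi$ realising any prescribed $(\jophi,\jos)\in\Pm$; the uniqueness will then be immediate from the semidirect product structure.

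I would first fix convenient representatives of the data. Let $\phi:O\to\R^m$ be a diffeomorphism near zero with $\phi(0)=0$ representing $\jophi$. The velocity $\jos\in\Tm\Lm$ is the 1-jet at zero of some smooth curve $\sigma:O\to\Lm$; using the global coordinates $A^i_j$ on $\Lm\cong\GL(m,\R)$, I take the linear representative $\psi_t(s)=A^i_j(\sigma(t))s^j$, so that $\sigma(t)=j^1_0\psi_t$, with $\psi_t(0)=0$ and each $\psi_t$ a linear isomorphism for $t$ in a small enough neighbourhood of zero. I would then set $\chi(s,t)=\phi(t)+\psi_t(s)$ and verify the two jet identities directly. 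Because $\psi_t(0)=0$, we have $\chio(t)=\chi(0,t)-\chi(0,0)=\phi(t)$, giving $\jchio=\jophi$; and $\chit(s)=\chi(s,t)-\chi(0,t)=\psi_t(s)$, so $\jchit=\sigma(t)$ and therefore $\jjchistart=\jos$. For the immersion condition, $\chit=\psi_t$ is linear-invertible near zero, while for $\chis(t)=\phi(t)+\psi_t(s)-\psi_0(s)$ the $t$-derivative at $(s,t)=(0,0)$ reduces to $D\phi(0)$, which is invertible; continuity then keeps the Jacobian invertible on a neighbourhood of the origin, so $\chis$ is an immersion near zero.

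Uniqueness requires nothing substantive: the pair $(\jchio,\jjchistart)$ lies in $\Lm\times\Tm\Lm$, and by construction its two components coincide with the prescribed $\jophi$ and $\jos$, which are themselves the unique semidirect-product coordinates of the given element. The only real obstacle is choosing the normalisation so that $\phi(0)=0$ and $\psi_t(0)=0$ for all $t$ in the domain; without this the double differences defining $\chio$ and $\chit$ would pick up extra boundary terms and the identification with $\jophi$ and $\jos$ would fail. Taking $\psi_t$ linear in $s$ sidesteps any smoothness issues with the family $t\mapsto\psi_t$ and reduces the immersion check to the open condition that $A(\sigma(t))$ be invertible.
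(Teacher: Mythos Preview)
Your proof is correct and follows essentially the same route as the paper. Both arguments construct an explicit $\chi$ realising a given $(\jophi,\jos)$ and verify the two jet identities directly; the paper writes down the bilinear polynomial $\chi^i(s,t)=A^i_j(\jos)s^j+A^i_j(\jophi)t^j+B^i_{jk}(\jos)s^jt^k$ in coordinates, while your $\chi(s,t)=\phi(t)+A^i_j(\sigma(t))s^j$ agrees with it to the relevant jet order, and your immersion and uniqueness checks match the paper's observation that the pair depends only on $D_1\chi(0,0)$, $D_2\chi(0,0)$ and $D_1D_2\chi(0,0)$.
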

\begin{proof}
Given $\chi$ satisfying the immersion condition, let $\phi=\chio$ and $\sigma(t)=\jchit$. Conversely, given $\jophi$ and $\jos$, define $\chi$ by
\[
\chi^i(s,t) = A^i_j(\jos) s^j + A^i_j(\jophi) t^j + B^i_{jk}(\jos) s^j t^k \, . 
\]
The result then follows from the fact that $\bigl( \jchio \, , \jjchistart \bigr)$ is determined by $D_1\chi(0,0)$, $D_2\chi(0,0)$ and $D_1 D_2\chi(0,0)$.
\end{proof}
\begin{cor}
The correspondence $\chi(s,t)\mapsto\chi(t,s)$ determines a well-defined exchange map $e:\Pm\to\Pm$ satisfying $\lambda\circ e=\mu$ where
\[
\lambda,\mu:\Pm\to\Lm \, , \qquad \lambda(\jophi,\jos)=\jophi \, , \quad \mu(\jophi,\jos)=\sigma(0) \, .
\] 
\end{cor}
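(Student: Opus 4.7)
The plan is to use the Proposition to encode each element of $\Pm$ as a 2-variable map $\chi$, define $e$ by swapping the two arguments, and then verify that the construction is independent of the choice of $\chi$ and that $\lambda\circ e=\mu$.

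First I unwind the construction. Given $(\jophi,\jos)\in\Pm$, the Proposition supplies some $\chi$ with $\phi=\chio$ and $\sigma(t)=\jchit$, and setting $\chi'(s,t)=\chi(t,s)$ we compute $\chi'_{0*}(t)=\chi(t,0)-\chi(0,0)=\chi_{*0}(t)$ and $\chi'_{*t}(s)=\chi(t,s)-\chi(t,0)=\chi_{t*}(s)$. So the natural definition is $e(\jophi,\jos)=(j^1_0\chi_{*0},\,j^1_0(t\mapsto j^1_0\chi_{t*}))$. The immersion conditions needed to place this new pair in $\Pm$ follow from those for $\chi$ together with the fact that $\sigma$ takes values in $\Lm$, which forces $D_1\chi(0,0)$ to be invertible.

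For well-definedness, the key observation is already present in the Proposition's proof:\ the pair $(\jchio,\jjchistart)$ is completely determined by the three data $D_1\chi(0,0)$, $D_2\chi(0,0)$ and $D_1 D_2\chi(0,0)$. Applying the same observation to $\chi'$, the image $e(\jophi,\jos)$ depends on $D_1\chi'(0,0)=D_2\chi(0,0)$, $D_2\chi'(0,0)=D_1\chi(0,0)$ and $D_1 D_2\chi'(0,0)=D_1 D_2\chi(0,0)$ (by equality of mixed partials); these are the same three data, with the first two interchanged. Hence any two choices of $\chi$ representing the same element of $\Pm$ produce the same $e(\jophi,\jos)$, and $e$ is well defined.

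Finally, the identity $\lambda\circ e=\mu$ falls out immediately from the explicit formula:\ by construction $\lambda(e(\jophi,\jos))=j^1_0\chi_{*0}=\sigma(0)=\mu(\jophi,\jos)$. The only substantive step in the whole argument is the well-definedness check, and this reduces completely to the three-data characterisation supplied by the Proposition, so there is no real obstacle.
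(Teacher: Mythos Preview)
Your argument is correct and follows the same route as the paper, which simply records ``Immediate in coordinates'': both approaches reduce to the observation from the Proposition that an element of $\Pm$ is determined by the three data $D_1\chi(0,0)$, $D_2\chi(0,0)$, $D_1D_2\chi(0,0)$, and that swapping the arguments of $\chi$ permutes these data.

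One small slip: your claim $D_1D_2\chi'(0,0)=D_1D_2\chi(0,0)$ is not quite right. Equality of mixed partials says $D_1D_2\chi=D_2D_1\chi$, but swapping the two arguments of $\chi$ instead transposes the two inputs of the bilinear form, so in fact $D_1D_2\chi'(0,0)(u,v)=D_1D_2\chi(0,0)(v,u)$; in the paper's coordinates this is the map $B^i_{jk}\mapsto B^i_{kj}$. This is as it must be, since the holonomic elements (those fixed by $e$) are characterised precisely by $B^i_{jk}=B^i_{kj}$. Your well-definedness argument is unaffected, because you only need the three data for $\chi'$ to be \emph{determined} by those for $\chi$, and the verification of $\lambda\circ e=\mu$ does not involve the third datum at all.
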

\begin{proof}
Immediate in coordinates.
\end{proof}
We may now define an element $(\jophi,\jos)\in\Pm$ to be \emph{semiholonomic} if $\lambda(\jophi,\jos)=\mu(\jophi,\jos)$, and to be \emph{holonomic} if $e(\jophi,\jos)=(\jophi,\jos)$. It is clear from the definition that a semiholonomic element is of the form $(\sigma(0),\jos)$. In coordinates, a semiholonomic element $(\jophi,\jos)$ satisfies $A^i_j(\jophi)=A^i_j(\jos)$, and a holonomic element satisfies this condition and, in addition, $B^i_{jk}(\jos)=B^i_{kj}(\jos)$.
\begin{prop}
The set $\Lbmhat$ of semiholonomic elements of $\Pm$ is a closed Lie subgroup. The set $\Lbm$ of holonomic elements is also a closed Lie subgroup which may be identified with the second order jet group.
\end{prop}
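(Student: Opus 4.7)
The plan is to work throughout in the global coordinates $(A^i_j, B^i_{jk})$ for $\Pm$ supplied by the previous proposition. The defining relations for both $\Lbmhat$ and $\Lbm$ are polynomial, so both subsets are automatically closed in $\Pm$, and Cartan's closed subgroup theorem will then reduce matters to verifying the subgroup axioms in each case.

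For $\Lbmhat$, I would exhibit it as the equalizer of two smooth group homomorphisms. The projection $\lambda$ is visibly a homomorphism by construction of the semidirect product. A direct check using the coordinate formula shows that $\mu(\jophi,\jos) = \sigma(0)$ is also a homomorphism: the $A$-part of the $\Tm\Lm$-component of the product reads $A^i_k(\jos_1)A^k_j(\jos_2)$, which is exactly the matrix of $\sigma_1(0)\cdot\sigma_2(0)$ in $\Lm$. Hence $\Lbmhat = \{g \in \Pm : \lambda(g) = \mu(g)\}$ is the equalizer of two homomorphisms, and therefore a closed subgroup.

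For $\Lbm$, I would verify the subgroup axioms directly. The identity has $A^i_j = \d^i_j$ and $B^i_{jk} = 0$, so it is holonomic. For closure under multiplication, substitute the constraints $A^i_j(\jophi_k) = A^i_j(\jos_k)$ and $B^i_{jk}(\jos_k) = B^i_{kj}(\jos_k)$ into the product formula; the $A$-part is semiholonomic by the previous paragraph, and the $B$-part collapses to
\[
B^i_{hl}(\jos_1) A^h_j(\jos_2) A^l_k(\jos_2) + A^i_h(\jos_1) B^h_{jk}(\jos_2),
\]
each summand visibly symmetric in $j,k$ (using symmetry of $B(\jos_1)$ for the first, and of $B(\jos_2)$ for the second). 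For the inverse, the identity-product equations solve uniquely to $A^i_j(\jophi^{-1}) = A^i_j(\jos^{-1}) = (A^{-1})^i_j(\jophi)$; the vanishing-$B$ equation of the product then expresses $A^i_h(\jos) B^h_{jk}(\jos^{-1})$ as a quantity manifestly symmetric in $j,k$, whence $B(\jos^{-1})$ is symmetric too.

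For the identification with the second order jet group, I would use the map that sends a $2$-jet $\jbophi$ to the element of $\Pm$ associated by the previous proposition with the symmetric map $\chi(s,t) = \phi(s+t)$. Symmetry of $\chi$ in its arguments forces the image into $\Lbm$; in coordinates, it sends the $2$-jet with data $(A^i_j, B^i_{jk})$ to the holonomic element with the same $(A^i_j, B^i_{jk})$, and is therefore a diffeomorphism onto $\Lbm$. Comparing the classical composition formula for $2$-jets with the restriction of the $\Pm$ product to $\Lbm$ displayed above then verifies that the map is a group isomorphism. The only real nuisance, rather than an obstacle, is keeping the three coordinate descriptions aligned; once set up carefully, everything is a routine index-tracking exercise.
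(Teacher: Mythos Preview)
Your proposal is correct and aligns with the paper's approach: both arguments rest on the same coordinate formul\ae, and your identification map $\chi(s,t)=\phi(s+t)$ unpacks to exactly the paper's $t\mapsto j^1_0(\tr_{-\phi(t)}\circ\phi\circ\tr_t)$, since $\chit(s)=\phi(s+t)-\phi(t)$. Your equalizer description of $\Lbmhat$ and your explicit subgroup verification for $\Lbm$ supply detail the paper leaves to the reader (it simply asserts that the closed-subgroup property ``follows straightforwardly from the coordinate conditions''), but the underlying route is the same.
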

\begin{proof}
The fact that both $\Lbmhat$ and $\Lbm$ are closed Lie subgroups follows straightforwardly from the coordinate conditions. If $\jbophi$ is an element of the second order jet group then
\[
\jbophi = j_0\bigl(t \mapsto j_0(\tr_{-\phi(t)}\circ\phi\circ\tr_t)\bigr)
\]
where $t \mapsto j_0(\tr_{-\phi(t)}\circ\phi\circ\tr_t)$ is a map $O\to\Lm$, so we may identify $\jbophi$ with $(\jophi,\jbophi)\in\Lbm\subset\Pm$. Conversely, given an element $(\jophi,\jos)\in\Lbm$ we may define $\phihat$ by
\[
\phihat^i(t)=A^i_j(\jos) t^j + \tfrac{1}{2} B^i_{jk}(\jos) t^j t^k
\]
and then $\jophi=\jophihat$ and $\jos=\jbophihat$.
\end{proof}
We have seen from the coordinate formul\ae\ that $\Lbm$ is the subgroup of $\Lbmhat$ satisfying the symmetry condition $B^i_{jk}=B^i_{kj}$. There is also a subgroup $\Lbmt$ satisfying the corresponding skew-symmetry condition, $B^i_{jk}+B^i_{kj}=0$, and we shall call this the \emph{curvature subgroup}. It may be defined abstractly by letting $\vee:\Lbmhat\to\Lbm$ denote the projection arising from the symmetrising map $\chi(s,t)\mapsto\frac{1}{2}\bigl(\chi(s,t)+\chi(t,s)\bigr)$ and putting $\Lbmt=\Lm\rtimes\ker\vee$, where we regard $\Lm\subset\Lbmhat$ by $\jophi\mapsto\bigl(\jophi,\joa\bigr)$. 

\section{The principal jet group and double contact elements}

We define the right action of the principal jet group $\Pm$ on the double velocity manifold $\Tm\TmE$ (see~\cite{Kol11}) by
\[
\jogt \cdot (\jophi,\jos) = (\jogt\cdot\jophi)\cdot\jos \, ;
\]
in coordinates, putting $\jogt_1=\jogt \cdot (\jophi,\jos)$, this is
\begin{align*}
u^a(\jogt_1) & = u^a(\jogt) \\
u^a_i(\jogt_1) & = u^a_h(\jogt) A^h_i(\jos) \\
u^a_{\cdot j}(\jogt_1) & = u^a_{\cdot k}(\jogt_1) A^k_j(\jophi) \\
u^a_{ij}(\jogt_1) & = u^a_{hk} A^h_i(\jos) A^k_j(\jophi) + u^a_h B^h_{ij}(\jos) \, .
\end{align*}
\begin{lem}
The action of $\Pm$ restricts to a free action on $\Fmrho\subset\Tm\FmE$.
\end{lem}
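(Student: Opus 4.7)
The plan is to work in the local coordinates $(u^a,u^a_i;u^a_{\cdot j},u^a_{ij})$ on $\Tm\TmE$ and use the action formulas displayed just above the lemma (correcting the obvious typo in the third line, where $\jogt_1$ should read $\jogt$ on the right-hand side). Suppose $(\jophi,\jos)\in\Pm$ fixes some $\jogt\in\Fmrho$; then equating coordinates of $\jogt$ and $\jogt\cdot(\jophi,\jos)$ gives the three identities
\begin{align*}
u^a_i &= u^a_h\,A^h_i(\jos),\\
u^a_{\cdot j} &= u^a_{\cdot k}\,A^k_j(\jophi),\\
u^a_{ij} &= u^a_{hk}\,A^h_i(\jos)\,A^k_j(\jophi) + u^a_h\,B^h_{ij}(\jos),
\end{align*}
all evaluated at $\jogt$. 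My goal is to extract, in order, $A(\jos)=I$, $A(\jophi)=I$, and $B(\jos)=0$, which together say that $(\jophi,\jos)$ is the identity of $\Pm$.

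Since $\jogt\in\Fmrho\subset\Tm\FmE$, the base point $\gt(0)$ lies in $\FmE$, so the $n\times m$ matrix $\bigl(u^a_h(\jogt)\bigr)$ has rank $m$. Left-cancellation in the first equation then forces $A^h_i(\jos)=\delta^h_i$. Substituting into the third equation rearranges it to $u^a_h\,B^h_{ij}(\jos) = -u^a_{ik}\bigl(A^k_j(\jophi)-\delta^k_j\bigr)$. Now fix $j$ and set $\xi^k=A^k_j(\jophi)-\delta^k_j$ and $X^h_i=-B^h_{ij}(\jos)$; then the second equation reads $u^a_{\cdot k}\xi^k=0$ and the rearranged third reads $u^a_{ik}\xi^k=u^a_h X^h_i$. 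Geometrically these two statements say exactly that the tangent vector $d\gt|_0(\xi)\in T_{\gt(0)}\FmE$ has vanishing $\taumoE$-component and that its remaining components come from the infinitesimal $\Lm$-action generated by $X$ --- in other words, $d\gt|_0(\xi)\in\ker d\rho$ at $\gt(0)$. But $\jogt\in\Fmrho$ means precisely that $\rho\circ\gt$ is an immersion at $0$, so no nonzero $\xi$ can have this property; hence $\xi=0$ for every $j$, giving $A(\jophi)=I$. Feeding $A(\jos)=A(\jophi)=I$ back into the third equation reduces it to $u^a_h\,B^h_{ij}(\jos)=0$, and the rank-$m$ property of $\bigl(u^a_h\bigr)$ now forces $B^h_{ij}(\jos)=0$.

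The one geometric input needed --- and the only real obstacle in what is otherwise a coordinate calculation --- is the identification of $\ker d\rho$ at a point $\jog\in\FmE$ with the image of the infinitesimal $\Lm$-action $X\mapsto u^a_k X^k_i\,\partial/\partial u^a_i$; this is routine, since $\rho$ is the quotient by the free $\Lm$-action on $\FmE$. Once it is in place, the proof is a sequence of three rank-$m$ cancellations, interleaved once with the immersion hypothesis on $\rho\circ\gt$. It will then also be transparent why the lemma requires $\Fmrho$ rather than the weaker $\Fm\FmE$: the second equation on its own only says that the columns of $A(\jophi)-I$ lie in the kernel of $\bigl(u^a_{\cdot j}(\jogt)\bigr)$, a matrix which need not have rank $m$, so the reparametrisation factor $\jophi$ can be pinned down only after the additional immersion condition is imposed.
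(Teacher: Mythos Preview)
Your freeness argument is correct and takes a genuinely different route from the paper's. The paper factors $(\jophi,\jos)=(\joid,\jospinv)\cdot(\jophi,\joa)$ into a $\Tm\Lm$-part and an $\Lm$-part, and then appeals to the two separate actions already described: $\Tm\Lm$ acts freely on $\Tm\FmE$ with quotient $\Tm\JmE$, and $\Lm$ acts freely on the image $\Fm\JmE$ of $\Fmrho$ under that quotient; freeness of the combined action follows. You instead stay in coordinates and isolate the one nontrivial step --- forcing $A(\jophi)=I$ --- via the identification of $\ker d\rho$ with the tangent space to the $\Lm$-orbit, applying the immersion hypothesis on $\rho\circ\gt$ directly. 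Your approach is more explicit and makes the necessity of the hypothesis $\jogt\in\Fmrho$ (rather than merely $\Fm\FmE$) fully transparent; the paper's factorisation is more conceptual and simply reuses structure already in place.

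One omission: the lemma asserts two things, and you have proved only freeness. You should also verify that the action preserves $\Fmrho$. From either viewpoint this is short: a representative of $\jogt\cdot(\jophi,\jos)$ is $t\mapsto\gt(\phi(t))\cdot\sigma(t)$, and since the pointwise $\Lm$-action moves along the fibres of $\rho$ we have $\rho\circ\gt_1=(\rho\circ\gt)\circ\phi$, which is again an immersion near $0$ because $\phi$ is a local diffeomorphism there.
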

\begin{proof}
To show that the action of $\Pm$ restricts to $\Fmrho$, write
\[
(\jophi,\jos)=\bigl(\joid,\jospinv\bigr)\cdot\bigl(\jophi,\joa\bigr)
\]
and use properties of the separate actions of $\Lm$ and $\Tm\Lm$. A similar approach shows that if $\jogt\cdot(\jophi,\jos)=\jogt$ when $\jogt\in\Fmrho$ then $\jophi=\joid$ and $\jos=\joa$.
\end{proof}
It is clear that the action of $\Pm$ is equivalent to the two-stage process of acting on $\Fmrho$ first with $\Tm\Lm$ and then with $\Lm$ as described in the previous section, and that the quotient of $\Fmrho$ by $\Pm$ is again $\Jm\JmE$.

By choosing a subgroup of $\Pm$ we may find a larger submanifold of $\Tm\TmE$ on which the action is free.
\begin{lem}
The action of the semiholonomic subgroup $\Lbmhat\subset\Pm$ is free on $\Tm\FmE$.
\end{lem}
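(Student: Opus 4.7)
The plan is to argue in coordinates, using the formulas stated just above the lemma. The one analytic fact the whole argument rests on is that $\jogt\in\Tm\FmE$ is precisely the condition that the base point $\gt(0)=\taumTmE(\jogt)$ lies in $\FmE$, equivalently that the matrix $(u^a_i(\jogt))$ has rank $m$. Since $\FmE$ is open in $\TmE$ this is preserved under the $\Pm$-action (because $A^h_i(\jos)\in\GL(m,\R)$), so the action does restrict to $\Tm\FmE$, and it is only freeness that requires work.

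Now suppose $\jogt\cdot(\jophi,\jos)=\jogt$ with $(\jophi,\jos)\in\Lbmhat$. Projecting by $\taumTmE$, the coordinate equation $u^a_i(\jogt)=u^a_h(\jogt)A^h_i(\jos)$ says exactly that $\gt(0)\in\FmE$ is fixed by the action of $\sigma(0)\in\Lm$. Since the $\Lm$-action on $\FmE$ is already known to be free, $\sigma(0)=\joid$, so $A^i_j(\jos)=\delta^i_j$. At this point the semiholonomic constraint $\lambda(\jophi,\jos)=\mu(\jophi,\jos)$ forces $A^i_j(\jophi)=A^i_j(\jos)=\delta^i_j$ as well, i.e., $\jophi=\joid$. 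Substituting both $A(\jophi)$ and $A(\jos)$ equal to the identity into the $u^a_{ij}$-formula collapses it to $u^a_h(\jogt)B^h_{ij}(\jos)=0$, and the rank-$m$ condition on $(u^a_h(\jogt))$ then gives $B^h_{ij}(\jos)=0$ for all $h,i,j$, whence $\jos=\joa$. Thus $(\jophi,\jos)$ is the identity of $\Pm$, and freeness follows.

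There is no serious obstacle here; the entire argument is driven by the rank-$m$ condition from $\Tm\FmE$ and by the way the semiholonomic constraint links the two factors of $\Pm$. The contrast with the previous lemma is what makes the result interesting: for the full group $\Pm$ the analogous freeness on $\Tm\FmE$ would fail, because once $\sigma(0)$ is forced to be trivial there is nothing left to pin down $\jophi$, and this is precisely what the extra condition $\lambda=\mu$ defining $\Lbmhat$ repairs.
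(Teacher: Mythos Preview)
Your proof is correct and follows essentially the same route as the paper: use the maximal-rank condition on $(u^a_h(\jogt))$ to deduce $A^h_i(\jos)=\delta^h_i$ from the $u^a_i$-equation, invoke the semiholonomic constraint to get $A^k_j(\jophi)=\delta^k_j$, and then read off $B^h_{ij}(\jos)=0$ from the $u^a_{ij}$-equation. Your additional remarks on why the action restricts and on the contrast with the full group are accurate and helpful, but the core argument is the paper's.
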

\begin{proof}
This follows easily from the coordinate formula. If
\[
u^a_i(\jogt) = u^a_h(\jogt) A^h_i(\jos)
\]
then $A^h_i(\jos)=\d^h_i$ because $\gt$ takes its values in $\FmE$ so that the matrix $u^a_h(\jogt)$ has maximal rank; the semiholonomic condition now implies $A^k_j(\jophi)=\d^k_j$ so that $B^h_{ij}=0$.
\end{proof}
A justification for the names `holonomic subgroup' and `semiholonomic subgroup' comes from the following.
\begin{prop}
An element $(\jophi,\jos)\in\Pm$ is in the semiholonomic subgroup $\Lbmhat$ if, and only if, it maps elements of $\FbmEhat$ to $\FbmEhat$; it is in the holonomic subgroup $\Lbm$ if, and only if, it maps elements of $\FbmE$ to $\FbmE$.
\end{prop}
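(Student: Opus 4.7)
The plan is to verify the statement in the local coordinates $(u^a,u^a_i,u^a_{\cdot j},u^a_{ij})$ on $\Tm\TmE$ using the coordinate expressions for the action of $\Pm$ displayed just before the lemma. The key non-degeneracy input is that for any $\jogt\in\Fm\taumoE$ --- and in particular for any $\jogt\in\FbmEhat$ or $\jogt\in\FbmE$ --- the matrix $u^a_h(\jogt)$ has maximal rank $m$, because $\gt(0)\in\FmE$ is a regular velocity.

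For the semiholonomic case I would handle both directions through a single computation. Writing $\jogt_1=\jogt\cdot(\jophi,\jos)$, the formulae give
\[
u^a_i(\jogt_1)=u^a_h(\jogt)A^h_i(\jos), \qquad u^a_{\cdot i}(\jogt_1)=u^a_{\cdot k}(\jogt)A^k_i(\jophi),
\]
so if $\jogt\in\FbmEhat$, meaning $u^a_i(\jogt)=u^a_{\cdot i}(\jogt)$, the condition $u^a_i(\jogt_1)=u^a_{\cdot i}(\jogt_1)$ reduces to $u^a_h(\jogt)[A^h_i(\jos)-A^h_i(\jophi)]=0$. If $(\jophi,\jos)\in\Lbmhat$ this holds automatically, so $\jogt_1\in\FbmEhat$; conversely, if preservation of $\FbmEhat$ is assumed, choosing any single $\jogt\in\FbmEhat$ and using the rank-$m$ condition on $u^a_h(\jogt)$ forces $A^h_i(\jos)=A^h_i(\jophi)$, i.e.\ $(\jophi,\jos)\in\Lbmhat$.

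For the holonomic case I would first exploit $\FbmE\subset\FbmEhat$: in the \emph{if} direction, $(\jophi,\jos)\in\Lbm\subset\Lbmhat$ and $\jogt\in\FbmE\subset\FbmEhat$ already yield $\jogt_1\in\FbmEhat$ by the previous step; in the \emph{only if} direction, applying the same argument to an arbitrary $\jogt\in\FbmE$ gives $A(\jophi)=A(\jos)$. In either case, substituting $A(\jophi)=A(\jos)$ into
\[
u^a_{ij}(\jogt_1)=u^a_{hk}(\jogt)A^h_i(\jos)A^k_j(\jophi)+u^a_h(\jogt)B^h_{ij}(\jos)
\]
and forming the skew part in $i,j$, the bilinear term $u^a_{hk}(\jogt)[A^h_iA^k_j-A^h_jA^k_i]$ vanishes after one relabelling $h\leftrightarrow k$, since $u^a_{hk}(\jogt)=u^a_{kh}(\jogt)$ on $\FbmE$. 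What remains is $u^a_h(\jogt)[B^h_{ij}(\jos)-B^h_{ji}(\jos)]$, and the same rank argument gives the equivalence $u^a_{ij}(\jogt_1)=u^a_{ji}(\jogt_1)\iff B^h_{ij}(\jos)=B^h_{ji}(\jos)$, i.e.\ $(\jophi,\jos)\in\Lbm$.

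The only real obstacle is book-keeping: keeping the two $\GL$-valued factors $\jophi$ and $\jos$ in the correct slots, and checking that after imposing $A(\jophi)=A(\jos)$ the quadratic-in-$A$ cross-term is genuinely symmetric in $i,j$ when paired with the symmetric $u^a_{hk}$ on $\FbmE$; everything else is a direct substitution into the coordinate formulae already given, using the non-degeneracy recorded earlier in the paper.
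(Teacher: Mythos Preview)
Your proof is correct and follows essentially the same coordinate approach as the paper: compute $u^a_i(\jogt_1)-u^a_{\cdot i}(\jogt_1)$ and use the maximal rank of $u^a_h(\jogt)$ to force $A(\jos)=A(\jophi)$, then (in the holonomic case) take the skew part of $u^a_{ij}(\jogt_1)$ to force $B^h_{ij}=B^h_{ji}$. The paper is terser, dispatching the holonomic case with ``a similar argument applies'', whereas you spell out the relabelling $h\leftrightarrow k$ that kills the quadratic-in-$A$ term when $u^a_{hk}$ is symmetric and $A(\jophi)=A(\jos)$.
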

\begin{proof}
These follow from the coordinate formula. The direct arguments are straightforward; for the converse, if there is at least one element $\jogt\in\FbmEhat$ such that $\jogt\cdot(\jophi,\jos)\in\FbmEhat$ then
\[
u^a_i(\jogt)=u^a_{\cdot i}(\jogt) \, , \qquad u^a_k(\jogt) A^k_i(\jos) = u^a_{\cdot k}(\jogt) A^k_i(\jophi)
\]
so that the maximal rank of $u^a_i(\jogt)$ shows that $A^k_i(\jos)=A^k_i(\jophi)$ and hence that $(\jophi,\jos)\in\Lbmhat$. A similar argument applies for $\FbmE$ and $\Lbm$.
\end{proof}

We now consider the three vector sub-bundles $\VmtaumoE, \VmtaumoEsym, \VmtaumoEalt$ of $\Tm\FmE$.
\begin{lem}
The free action of $\Lbmhat$ on $\Tm\FmE$ restricts to $\VmtaumoE$; similarly the free actions of the subgroups $\Lbm$ and $\Lbmt$ restrict to $\VmtaumoEsym$ and $\VmtaumoEalt$ respectively.
\end{lem}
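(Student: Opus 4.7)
The plan is to reduce everything to the coordinate formula for the action displayed at the start of this section. In coordinates, $\VmtaumoE$ is cut out of $\Tm\FmE$ by the equations $u^a_{\cdot j}=0$, and $\VmtaumoEsym$, $\VmtaumoEalt$ are the further sub-bundles on which $u^a_{ij}$ is symmetric, respectively skew, in $i,j$. Both $\Lbm$ and $\Lbmt$ sit inside $\Lbmhat$ as the subgroups singled out by the symmetry, respectively skew-symmetry, of $B^h_{ij}$, so freeness of each of the three actions on the relevant sub-bundle is inherited directly from the previous lemma; only the ``restricts to'' part needs verification.

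The first assertion is then immediate from the transformation rule: $u^a_{\cdot j}(\jogt_1)=u^a_{\cdot k}(\jogt)\,A^k_j(\jophi)$, so the vanishing of $u^a_{\cdot k}$ is preserved by every element of $\Pm$, in particular by $\Lbmhat$.

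For the other two assertions I would look at the transformation of $u^a_{ij}$, namely $u^a_{hk}(\jogt)\,A^h_i(\jos)\,A^k_j(\jophi)+u^a_h(\jogt)\,B^h_{ij}(\jos)$, where $A^h_i(\jos)=A^h_i(\jophi)$ because we are inside $\Lbmhat$. If the input $u^a_{hk}$ is symmetric in $h,k$ then relabelling the summation indices $h\leftrightarrow k$ shows that $u^a_{hk}\,A^h_i\,A^k_j$ is symmetric in $i,j$; combined with $B^h_{ij}=B^h_{ji}$, which is the defining condition of $\Lbm$ inside $\Lbmhat$, this gives the claim for $\Lbm$. An identical calculation with ``symmetric'' replaced by ``skew'' throughout, and using $B^h_{ij}+B^h_{ji}=0$ for the curvature subgroup, gives the claim for $\Lbmt$.

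The argument is entirely routine coordinate bookkeeping, and there is no real obstacle: the main observation is simply that the defining conditions on $B^h_{ij}$ for $\Lbm$ and $\Lbmt$ inside $\Lbmhat$ are exactly the ones needed to make the quadratic-in-$A$ and linear-in-$B$ parts of the transformed $u^a_{ij}$ each carry the correct symmetry in $i,j$.
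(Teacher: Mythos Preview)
Your argument is correct and is precisely the approach the paper takes: its own proof is the single sentence ``This also follows easily from the coordinate formula,'' and you have simply written out that coordinate bookkeeping explicitly. There is nothing to add or correct.
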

\begin{proof}
This also follows easily from the coordinate formula. 
\end{proof}
\begin{thm}
\label{Tvert}
Put
\[
\VmJmE = \VmtaumoE / \Lbmhat \, , \quad \VmJmEsym = \VmtaumoEsym / \Lbm \, , \quad \VmJmEalt = \VmtaumoEalt / \Lbmt \, ;
\]
then each quotient space is a manifold, and is the total space of a vector bundle over $\JmE$.
\end{thm}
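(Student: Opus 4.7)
The plan is to decompose each quotient in two stages by exploiting the semidirect product structure of the acting group. Each of $\Lbmhat$, $\Lbm$, $\Lbmt$ fits into a split short exact sequence $1\to N\to\Lbmhat\to\Lm\to 1$ (and similarly for the other two), where $N$ is the normal subgroup defined by $A^i_j=\delta^i_j$ — parameterised by the $B^i_{jk}$ with no restriction, with the symmetric restriction $B^i_{jk}=B^i_{kj}$, or with the skew restriction $B^i_{jk}=-B^i_{kj}$ respectively. I will first quotient $\VmtaumoE$ by $N$ to obtain a smooth vector bundle over $\FmE$, then quotient that by the residual $\Lm$-action, which covers the classical principal bundle $\FmE\to\JmE$.

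For stage one, the coordinate formula for the action of $\Pm$ specialises on $N$ (where $A^i_j=\delta^i_j$) to $u^a\mapsto u^a$, $u^a_i\mapsto u^a_i$ and $u^a_{ij}\mapsto u^a_{ij}+u^a_hB^h_{ij}$; thus $N$ acts fibrewise and affinely on $\VmtaumoE\to\FmE$, fixing the base point. Because $u^a_h$ has maximal rank on $\FmE$ the linear map $B^h_{ij}\mapsto u^a_hB^h_{ij}$ is injective, so the action is free and proper, and its image in the fibre $T_{\g(0)}E\otimes\bigotimes^2\R^{m*}$ at $\jog$ is precisely $[\jog]\otimes\bigotimes^2\R^{m*}$, where $[\jog]\subset T_{\g(0)}E$ is the subspace spanned by the columns of $u^a_h$. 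Hence $\VmtaumoE/N$ is a smooth vector bundle over $\FmE$ with fibre $(T_{\g(0)}E/[\jog])\otimes\bigotimes^2\R^{m*}$; I would verify smoothness in coordinates by selecting, locally on $\FmE$, a smooth frame field complementary to $[\jog]$ inside $T_{\g(0)}E$.

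For stage two, normality of $N$ in $\Lbmhat$ yields a well-defined $\Lm$-action on $\VmtaumoE/N$ that covers the free proper $\Lm$-action on $\FmE$. The subspace $[\jog]$ is $\Lm$-invariant by the definition of a contact element, so the induced action on the fibre is trivial on $T_{\g(0)}E/[\jog]$ and is the canonical $\GL(m,\R)$-representation on $\bigotimes^2\R^{m*}$. The quotient $\VmJmE=(\VmtaumoE/N)/\Lm$ is therefore identified with the vector bundle over $\JmE$ associated to the principal bundle $\FmE\to\JmE$ by this representation, which provides the required manifold and vector bundle structure. The same two-stage argument applies verbatim to $\VmJmEsym=\VmtaumoEsym/\Lbm$ and $\VmJmEalt=\VmtaumoEalt/\Lbmt$ with $N$ replaced by its symmetric or skew-symmetric subgroup and $\bigotimes^2\R^{m*}$ by $S^2\R^{m*}$ or $\bigwedge^2\R^{m*}$. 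The only real technical step is the smoothness of the stage-one quotient; once it is in hand, the stage-two descent follows from classical principal bundle theory applied to $\FmE\to\JmE$.
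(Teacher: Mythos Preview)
Your argument is correct and takes a genuinely different route from the paper's. The paper proceeds by writing down explicit quotient coordinates: on the chart where $\det(u^j_i)\ne 0$ it sets $u^\a_i\circ\rhohat = u^\a_h r^h_i$ and $v^\a_{ij}\circ\rhohat = u^\a_{hk} r^h_i r^k_j - u^\a_h r^h_k u^k_{pq} r^p_i r^q_j$ (with $r$ the inverse of the $m\times m$ block $u^j_i$), checks that these are well defined under the $\Lbmhat$-action, and reads off the linear fibre structure directly. Your approach instead exploits the semidirect decomposition $\Lbmhat = \Lm\ltimes N$: first quotient by the abelian normal subgroup $N$ acting by fibre translations, obtaining a vector bundle over $\FmE$ with fibre $(T_{\g(0)}E/[\jog])\otimes\bigotimes^2\R^{m*}$, and then descend along the principal bundle $\FmE\to\JmE$.

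Each approach has its virtues. The paper's coordinate formul\ae\ are exactly those needed later in the proof of the affine-bundle theorem for $\JbmEhat$ and $\JbmE$, so the computation is reused. Your argument, on the other hand, makes the structure of the fibre transparent: it is visibly the normal space $T_{\g(0)}E/[\jog]$ to the contact element tensored with the relevant piece of $\bigotimes^2\R^{m*}$, and the symmetric/skew cases fall out by restricting the second tensor factor. One small wording issue: calling the stage-two quotient ``the vector bundle associated to $\FmE\to\JmE$ by this representation'' is slightly loose, because the first tensor factor $T_{\g(0)}E/[\jog]$ is not a fixed $\Lm$-module but itself a bundle over $\JmE$. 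The clean statement is that $\VmtaumoE/N$ is an $\Lm$-equivariant vector bundle over $\FmE$, and such bundles descend to vector bundles over $\FmE/\Lm=\JmE$; your argument already contains everything needed for that formulation.
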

\begin{proof}
Let $\rhohat:\VmtaumoE\to\VmJmE$ be the quotient. Using coordinates $(u^a,u^a_i,u^a_{ij})$ on $\VmtaumoE$ with $u^a_{\cdot j}=0$ and with $\det(u^j_i) \ne 0$, define coordinates $(u^a,u^\a_i,v^\a_{ij})$ on the quotient (where $\a=m+1,\ldots,n$) by
\begin{align*}
u^a\circ\rhohat & = u^a \\
u^\a_i\circ\rhohat & = u^\a_h r^h_i \\
v^\a_{ij}\circ\rhohat & = u^\a_{hk} r^h_i r^k_j - u^\a_h r^h_k u^k_{pq} r^p_i r^q_j
\end{align*}
where $r^k_j u^j_i = \d^k_i$. One may check that the coordinates are well-defined and give smooth transition functions, thus providing a manifold structure on the quotient. One may also check, using these coordinates, that the vector space structure on the fibres of $\VmtaumoE$ projects to the quotient, and that the $v^\a_{ij}$ are linear fibre coordinates corresponding to linear local trivialisations, so that $\VmJmE\to\JmE$ is a vector bundle.

A similar approach may be used for the other two quotient spaces.
\end{proof}
\begin{cor}
The split short exact sequence of vector bundles over $\FmE$
\[
0\to\VmtaumoEsym\to\VmtaumoE\to\VmtaumoEalt\to 0
\]
corresponding to the direct sum decomposition of $\VmtaumoE$ projects to a split short exact sequence of vector bundles over $\JmE$
\[
0\to\VmJmEsym\to\VmJmE\to\VmJmEalt\to 0 \, .
\]
\end{cor}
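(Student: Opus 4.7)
The strategy is to construct the three bundle maps $i_* : \VmJmEsym \to \VmJmE$, $p_* : \VmJmE \to \VmJmEalt$ and the splitting $s_* : \VmJmEalt \to \VmJmE$ by descending the corresponding maps from the upstairs split sequence over $\FmE$, and then to verify that the resulting sequence is split exact by reading off coordinate expressions. The bundle structures on each quotient are already in hand from Theorem~\ref{Tvert}.

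For the two inclusions, both $\Lbm$ and $\Lbmt$ are subgroups of $\Lbmhat$, and their restricted actions on $\VmtaumoEsym$ and $\VmtaumoEalt$ are simply the ambient $\Lbmhat$-action. Consequently the inclusions $\VmtaumoEsym \hookrightarrow \VmtaumoE$ and $\VmtaumoEalt \hookrightarrow \VmtaumoE$, composed with $\rhohat$, are $\Lbm$- and $\Lbmt$-invariant respectively, and hence factor through $\VmJmEsym$ and $\VmJmEalt$ to define $i_*$ and $s_*$. Injectivity comes straight from the coordinate formula for the action: if $\jogt, \jogt' \in \VmtaumoEsym$ are related by $\jogt \cdot (\jophi,\jos) = \jogt'$ with $(\jophi,\jos) \in \Lbmhat$, then symmetry of both sides combined with the maximal rank of $u^a_h(\jogt)$ forces $B^h_{ij}(\jos)$ to be symmetric, so $(\jophi,\jos) \in \Lbm$; the skew case for $s_*$ is analogous.

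The projection $p_*$ is the essential point, since the source is a quotient by $\Lbmhat$ while the target is a quotient by the smaller group $\Lbmt$. I would first exhibit a group homomorphism $\Lbmhat \to \Lbmt$, namely $(\jophi, \jos) \mapsto (\jophi, \jos')$ where $\jos'$ has the same $A$-coordinates as $\jos$ and $B$-coordinates $B^h_{[ij]}(\jos)$ obtained by taking the skew part. Using the product formula for $\Pm$ together with the semiholonomic relation $A(\jos)=A(\jophi)$, a direct calculation shows that antisymmetrisation in the lower indices of the $B$-coefficient of a product in $\Lbmhat$ yields precisely the formula for the corresponding product in $\Lbmt$, confirming the homomorphism property. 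An entirely parallel calculation on the action formula shows that the fibrewise skew-symmetrisation map $\VmtaumoE \to \VmtaumoEalt$ intertwines the $\Lbmhat$-action on its domain with the $\Lbmt$-action on its codomain via this homomorphism, so it descends to the required $p_*$.

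Finally, linearity on fibres, the identity $p_* \circ s_* = \id$, and exactness at $\VmJmE$ are all transparent in the quotient coordinates $(u^a, u^\a_i, v^\a_{ij})$ introduced in the proof of Theorem~\ref{Tvert}: the symmetric and skew parts of $v^\a_{ij}$ serve as analogous coordinates on $\VmJmEsym$ and $\VmJmEalt$, and the three maps become the obvious inclusions and projection of the decomposition $v^\a_{ij} = v^\a_{(ij)} + v^\a_{[ij]}$. The hard part is the verification that skew-projection $\Lbmhat \to \Lbmt$ really is a group homomorphism, a cancellation that depends crucially on the semiholonomic identity $A(\jos)=A(\jophi)$; once that is in place the remaining steps amount to bookkeeping.
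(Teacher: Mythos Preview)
Your argument is correct and follows the same coordinate-based route as the paper: the injectivity of $i_*$ (and $s_*$) via the maximal-rank argument on $u^a_h$ is exactly what the paper does, and your additional construction of $p_*$ through the skew-projection homomorphism $\Lbmhat\to\Lbmt$ is a sound way to fill in the detail the paper leaves implicit. The key cancellation you highlight---that antisymmetrising the $B$-coordinate of a product in $\Lbmhat$ respects multiplication precisely because the semiholonomic condition $A(\jos)=A(\jophi)$ makes the bilinear term $B^i_{hl}A^h_jA^l_k$ symmetric under simultaneous exchange of $(h,j)\leftrightarrow(l,k)$---is genuine and checks out directly from the product formula for $\Pm$.
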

\begin{proof}
Once again coordinates may be used to show that the injections
\[
\VmtaumoEsym,\VmtaumoEalt\to\VmtaumoE
\]
pass to the quotient. For instance, if $\jogt_1,\jogt_2\in\VmtaumoEsym$ project to the same element of $\VmJmEsym$ then
\[
u^a_{ij}(\jogt_1) = u^a_{hk}(\jogt_2) A^h_i A^k_j + u^a_k(\jogt_2) B^k_{ij}
\]
for some element of $\Lbmhat$ with coordinates $(A^k_j,B^k_{ij})$; but $u^a_{ij}(\jogt_1)=u^a_{ji}(\jogt_1)$ and $u^a_{hk}(\jogt_2)=u^a_{kh}(\jogt_2)$, so that the maximal rank of the matrix $u^a_k(\jogt_2)$ shows that $B^k_{ij}=B^k_{ji}$ so that the element of $\Lbmhat$ must in fact be contained in the subgroup $\Lbm$.
\end{proof}

Finally, we define an element $[[\jogt]]\in\Jm\JmE$ to be holonomic if at least one representative double velocity $\jogt$ is holonomic, and to be semiholonomic if at least one representative $\jogt$ is semiholonomic; we write $\JbmE$ for the subset of holonomic double contact elements, and $\JbmEhat$ for the subset of semiholonomic double contact elements.
\begin{thm}
The subset $\JbmEhat$ is a submanifold of $\Jm\JmE$ and is the total space of an affine bundle over $\JmE$ modelled on $\VmJmE$; the subset $\JbmE$ is also a submanifold of $\Jm\JmE$ and is the total space of an affine bundle over $\JmE$ modelled on $\VmJmEsym$. Furthermore, we may write
\[
\JbmEhat = \JbmE \oplus_{\JmE} \VmJmEalt \, ,
\]
giving a decomposition of semiholonomic double contact elements into holonomic and `curvature' components.
\end{thm}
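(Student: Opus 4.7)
My plan is to identify $\JbmEhat$ and $\JbmE$ with quotients of the affine sub-bundles $\FbmEhat\to\FmE$ and $\FbmE\to\FmE$ by the free actions of $\Lbmhat$ and $\Lbm$ respectively, and then transport the affine structure and the sym/alt decomposition through those quotients, using a pair of group homomorphisms from $\Lbmhat$ onto $\Lbm$ and $\Lbmt$.

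I would begin with the set-theoretic identifications $\JbmEhat=\FbmEhat/\Lbmhat$ and $\JbmE=\FbmE/\Lbm$. The proposition characterising which elements of $\Pm$ send $\FbmEhat$ into itself shows that each $\Pm$-orbit on $\Fmrho$ meets $\FbmEhat$ in a single $\Lbmhat$-orbit, and analogously for $\FbmE$ and $\Lbm$; combined with the free action of $\Lbmhat$ on $\Tm\FmE$ this makes each quotient into a smooth manifold embedded in $\Jm\JmE$. I would then verify that the affine bundle $\FbmEhat\to\FmE$ with model $\VmtaumoE$ is $\Lbmhat$-equivariant: the action on $\FbmEhat$ covers the $\Lm$-action on $\FmE$ via $\lambda=\mu$, and for $\jogt_1,\jogt_2\in\FbmEhat$ in a common fibre the coincidence $u^a_h(\jogt_1)=u^a_h(\jogt_2)$ makes the $u^a_h B^h_{ij}(\jos)$ terms cancel in the coordinate action on $u^a_{ij}$, so that differences transform exactly by the linear $\Lbmhat$-action on $\VmtaumoE$ established earlier. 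Passing to the quotient, together with Theorem~\ref{Tvert}, gives $\JbmEhat\to\JmE$ as an affine bundle modelled on $\VmJmE$; the parallel argument for $\JbmE$ uses the $\Lbm$-action and yields the model $\VmJmEsym$.

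For the decomposition, I would introduce, alongside the symmetrising homomorphism $\vee:\Lbmhat\to\Lbm$, a companion antisymmetrising map $\Lbmhat\to\Lbmt$ given by $(A^i_j,B^i_{jk})\mapsto(A^i_j,\tfrac12(B^i_{jk}-B^i_{kj}))$, and verify from the coordinate product formula that it is also a group homomorphism; together the two maps identify $\Lbmhat$ with the fibred product $\Lbm\times_{\Lm}\Lbmt$. The velocity-level splitting $\FbmEhat=\FbmE\oplus_{\FmE}\VmtaumoEalt$ is then $\Lbmhat$-equivariant when the first summand is acted on through $\vee$ and the second through its antisymmetric companion; this is confirmed by symmetrising and antisymmetrising in $i,j$ the transformation law for $u^a_{ij}$ and noting that $u^a_{hk}A^h_iA^k_j$ has no antisymmetric part in $i,j$ when $u^a_{hk}$ is symmetric in $h,k$, and symmetrically in the dual case. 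Passing to the quotient then yields the claimed identification $\JbmEhat=\JbmE\oplus_{\JmE}\VmJmEalt$, with the smoothness and affine-bundle structure inherited from the two summands.

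The main obstacle is this last step, because $\Lbmhat$ preserves neither $\FbmE$ nor $\VmtaumoEalt$ individually, only their combined total space $\FbmEhat$; the decomposition of the quotient therefore hinges on exhibiting the antisymmetric companion of $\vee$ and on the resulting isomorphism $\Lbmhat\cong\Lbm\times_{\Lm}\Lbmt$. Once this group-theoretic picture is in place the geometric decomposition is essentially forced.
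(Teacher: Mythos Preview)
Your plan is sound and reaches the same conclusions as the paper, but the organisation is genuinely different. The paper works almost entirely in explicit charts: it writes down coordinates $(u^a,u^\a_i,v^\a_{ij})$ on $\JbmEhat$ by the same recipe as in Theorem~\ref{Tvert} (inverting the rank-$m$ block $u^j_i$), exhibits $\JbmEhat$ as the slice $v^\a_{\cdot i}=u^\a_i$ inside the open piece $\Jm\pimE\subset\Jm\JmE$, checks the affine action in these coordinates, and then disposes of the decomposition in one sentence by saying that the velocity-level splitting ``is independent of the choice of representative''. You instead structure everything around the group theory: the identifications $\JbmEhat=\FbmEhat/\Lbmhat$ and $\JbmE=\FbmE/\Lbm$ via the proposition on orbit intersections, the equivariance of the affine action, and---the genuinely new ingredient---the observation that the antisymmetrising map $(A,B)\mapsto(A,\tfrac12(B-B^T))$ is a group homomorphism, giving $\Lbmhat\cong\Lbm\times_{\Lm}\Lbmt$. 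This fibred-product description of the semiholonomic subgroup is not stated in the paper, and it is precisely what converts the phrase ``independent of the choice of representative'' into a transparent equivariance argument. What the paper's coordinate route buys in return is that the smooth structure and the \emph{embedded} submanifold property of $\JbmEhat$ in $\Jm\JmE$ fall out of the explicit charts; in your version you should still say a word about properness of the free $\Lbmhat$-action (or invoke local triviality) and check that the induced injection $\FbmEhat/\Lbmhat\hookrightarrow\Jm\JmE$ is an embedding rather than merely an injective smooth map.
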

\begin{proof}
Once again we use coordinates. The coordinates $(u^a,u^a_i,u^a_{ij})$ with $u^a_{\cdot i}=u^a_i$ and with $\det(u^j_i) \ne 0$ on $\FbmEhat$ may be used to define coordinates $(u^a,u^\a_i,v^\a_{ij})$ on $\JbmEhat$ in the same way as in Theorem~\ref{Tvert}; indeed the standard method of constructing coordinates on the manifold of first order contact elements may be used to construct coordinates $(u^a,u^\a_i,v^\a_{\cdot j},v^\a_{ij})$ on the open submanifold $\Jm\pimE\subset\Jm\JmE$, the quotient of $\Fm\taumE\subset\Fm\rho$ by $\Pm$, and then $\JbmEhat\subset\Jm\pimE$ is the submanifold satisfying the coordinate constraint $v^\a_{\cdot i}=u^\a_i$. We may then check that the affine action of $\VmtaumoE$ on $\FbmEhat$ projects to an affine action of the vector bundle $\VmJmE$ on $\JbmEhat$. The same argument applies in the holonomic case, where the additional constraint $u^a_{ij}=u^a_{ji}$ on $\FbmE$ gives rise to a similar constraint $v^\a_{ij}=v^\a_{ji}$ on $\JbmE$. The decomposition of semiholonomic double contact elements comes from the decomposition of semiholonomic double velocities and is independent of the choice of representative.
\end{proof}

\section*{Acknowledgements}

The author wishes to acknowledge the  support of grant no.\  201/09/0981 for Global Analysis and its Applications from the Czech Science Foundation, and also the joint IRSES project GEOMECH (EU FP7, nr 246981).

\bigskip\bigskip\bigskip
\noindent
Department of Mathematics, Faculty of Science\\
The University of Ostrava\\
30.\ dubna 22\\
701 03 Ostrava\\
Czech Republic

\bigskip
\noindent
Email: \url{david@symplectic.demon.co.uk}

\end{document}